\documentclass{amsart}
\usepackage{amssymb}

\newtheorem{theorem}{Theorem}[section]
\newtheorem{lemma}[theorem]{Lemma}
\newtheorem{cor}[theorem]{Corollary}

\theoremstyle{definition}
\newtheorem{definition}[theorem]{Definition}

\theoremstyle{remark}
\newtheorem{remark}[theorem]{Remark}

\begin{document}

\title{The growth of torus link groups}

\author{Yoshiyuki Nakagawa}
\address{Department of Economics, Ryukoku University, Kyoto, 612-8577, Japan}
\email{nakagawa@mail.ryukoku.ac.jp}

\author{Makoto Tamura}
\address{College of General Education,
         Osaka Sangyo University, Osaka, 574-8530, Japan}
\email{mtamura@las.osaka-sandai.ac.jp}

\author{Yasushi Yamashita}
\address{Department of Information and Computer Sciences,
         Nara Women's University, Nara, 630-8506, Japan}
\email{yamasita@ics.nara-wu.ac.jp}
\thanks{This work was supported by JSPS KAKENHI Grant Number 23540088}

\keywords{growth function, torus link group, free product with amalgamation,
          Perron number}

\subjclass[2010]{Primary 20E06; Secondary 20F05, 57M25}

\date{\today}

\begin{abstract}
Let $G$ be a finitely generated group with a finite generating set $S$.
For $g\in G$,
let $l_S(g)$ be the length of the shortest word over $S$ representing $g$.
The growth series of $G$ with respect to $S$ is
the series $A(t) = \sum_{n=0}^\infty a_n t^n$,
where $a_n$ is the number of elements of $G$ with $l_S(g)=n$.
If $A(t)$ can be expressed as a rational function of $t$,
then $G$ is said to have a rational growth function.

We calculate explicitly the rational growth functions of 
$(p,q)$-torus link groups for any $p, q > 1.$
As an application, we show that their growth rates are Perron numbers.
\end{abstract}

\maketitle

\section{Introduction and the main result}
\label{sec:intro}

Let $G$ be a finitely generated group
with a finite generating set $S$.
For $g\in G$, we denote by $l_S(g)$ 
the length of the shortest word over $S$ representing $g$.
The growth series of $G$ with respect to $S$ is 
the series $A(t) = \sum_{n=0}^\infty a_n t^n$,
where $a_n$ is the number of elements of $G$ with $l_S(g)=n$.
In many important and interesting cases,
$A(t)$ can be expressed as a rational function of $t$.
However, it is often very difficult to compute $A(t)$ explicitly.
Note that the radius of convergence $R$ of $A(t)=P(t)/Q(t)$
is the modulus of a zero of the denominator $Q(t)$
closest to the origin $0\in\mathbb C$ of all zeros of $Q(t)$.
Recall that the growth rate 
$\omega = \limsup_{n\rightarrow\infty} \sqrt[n]{a_n}$ is
equal to $1/R$.
After Cannon-Wagreich \cite{MR1166120},
nice algebraic properties of $R$ or $\omega$
(e.g., Salem numbers, Pisot numbers, Perron numbers) 
were investigated. 

In this paper, we consider torus link group:
the fundamental group of the complement of a torus link in 3-sphere.
They are important objects in knot theory.
The $(p, q)$-torus link group has a presentation 
$T_{p,q} = \langle x, y | x^p = y^q \rangle.$
$T_{p,q}$ has a nontrivial center generated by $x^p = y^q$
and we add this element, say $z$, to the generating set.
We denote this presentation
by $T_{p,q}' = \langle x, y, z | x^p = y^q = z\rangle$.
The case for $p=2$ and $q=3$ (the trefoil group) has some history.
In \cite{MR1285316}, $T_{2,3}$ and $T_{2,3}'$
were studied as a three stranded braid group and 
the rational growth functions for both cases were given.
Also, the rational growth function for $T_{2,3}'$
was calculated in \cite{MR1476956} and in chapter 14 of \cite{MR2894945},
which is the starting point of this paper.
We note that $T_{2,3}$ and $T_{2,4}$
were studied in \cite{MR2746061}
as examples of groups for which the growth rates are realized.
(See \cite{MR2746061} for the definition of ``realized.'')

The aim of this paper is to give the explicit expression 
of the rational growth function of $T_{p,q}'$ for any $p, q>1$,
and study the algebraic property of the growth rate.
We denote by $C_n(t)$ the growth series (polynomial) of
the cyclic group of order $n$, i.e., 
$C_n(t) = 1 + 2t + \dots + 2t^{(n-1)/2}$ if $n$ is odd, and 
$C_n(t) = 1 + 2t + \dots + 2t^{n/2-1} + t^{n/2}$ if $n$ is even,
and $C_\infty(t) = (1+t)/(1-t)$ for infinite cyclic case.
Here is our main theorem:

\begin{theorem}
The rational growth function of 
$T_{p,q}' = \langle x, y, z | x^p = y^q = z\rangle$ for any $p,q>1$ is
\[
A(t)  = \frac{ C_\infty(t) C_p(t) C_q(t)}{C_p(t)+C_q(t)-C_p(t)C_q(t)}
 + \frac{ m_q(t) C_p(t)^2 + m_p(t) C_q(t)^2}{(C_p(t)+C_q(t)-C_p(t)C_q(t))^2},
\]
where $m_r(t) = t^{r/2}$ if $r$ is even, $0$ if $r$ is odd. 
\end{theorem}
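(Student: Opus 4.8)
The plan is to exploit that $z$ is central and that $T_{p,q}'$ is, as an abstract group, the amalgamated product $\langle x\rangle *_{\langle z\rangle}\langle y\rangle$ of two infinite cyclic groups along the infinite cyclic $\langle z\rangle=\langle x^p\rangle=\langle y^q\rangle$, whose central quotient $\bar T:=T_{p,q}'/\langle z\rangle$ is the free product $\mathbb Z/p*\mathbb Z/q$ equipped with the standard generating set $\{\bar x^{\pm1},\bar y^{\pm1}\}$. By the normal form theorem for amalgamated products, every $g\in T_{p,q}'$ is uniquely of the form $g=z^{n(g)}\sigma(\bar g)$, where $\bar g\in\bar T$ is the image of $g$ and $\sigma$ sends a reduced word of $\bar T$ to the same word in $T_{p,q}'$ with each syllable replaced by the power of $x$ or $y$ whose exponent lies in a fixed fundamental domain of $\mathbb Z/p$ or $\mathbb Z/q$. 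Thus $T_{p,q}'$ is the disjoint union over $\bar g\in\bar T$ of the $\langle z\rangle$-cosets, which I will call fibres, and the idea is to compute $l_S$ fibre by fibre and then sum.

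I first record the growth data of $\bar T$ for its standard generating set. Let $W_x(t)$ and $W_y(t)$ be the generating functions of the reduced words of $\bar T$ that are trivial or begin with an $\bar x$-syllable, respectively a $\bar y$-syllable. Reading off the first syllable gives the recursions $W_x=1+(C_p(t)-1)W_y$ and $W_y=1+(C_q(t)-1)W_x$, whence $W_x=C_p/(C_p+C_q-C_pC_q)$ and $W_y=C_q/(C_p+C_q-C_pC_q)$; the growth series of $\bar T$ is then $F(t)=W_x+W_y-1=C_pC_q/(C_p+C_q-C_pC_q)$, and the length-preserving reversal of words shows that $W_x,W_y$ also count the reduced words that are trivial or end with a syllable of the given type. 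I will also use that $l_{\bar T}$ is additive along the syllables of a reduced word.

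The heart of the argument is the word-length formula on a single fibre. Call a syllable of $\bar g$ \emph{special} if it equals $\bar x^{p/2}$ (when $p$ is even) or $\bar y^{q/2}$ (when $q$ is even), and let $M(\bar g),M_x(\bar g),M_y(\bar g)$ be the numbers of special syllables of all types, of $x$-type and of $y$-type. A word for $g$ arises by realising each syllable $\bar x^a$ of $\bar g$ by some power $x^{a'}$ with $a'\equiv a\pmod p$ (and similarly for $y$-syllables), which contributes $|a'|$ to the length and a fixed integer to the accumulated $z$-exponent $E$, and then collecting the residual $z^{\,n(g)-E}$ in front; the resulting length is $|n(g)-E|+\sum|a'|+\sum|b'|$. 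Minimising, each non-special syllable has a \emph{unique} cheapest realisation, of cost $l_{\mathbb Z/p}$ or $l_{\mathbb Z/q}$, whereas a special syllable $\bar x^{p/2}$ has exactly the two cheapest realisations $x^{p/2}$ and $x^{-p/2}$, whose contributions to $E$ differ by one; hence the values of $E$ attainable at the minimal total syllable cost $l_{\bar T}(\bar g)$ form an interval of $M(\bar g)+1$ consecutive integers, and pushing $E$ outside this interval forces a non-minimal realisation of some syllable, costing at least one extra per unit of displacement of $E$ — which is never profitable, since $|n(g)-E|$ decreases by at most one per unit. (This last estimate is precisely where $p,q>1$ is needed.) Therefore, on the fibre over $\bar g$ the function $l_S$ equals $l_{\bar T}(\bar g)$ on an interval of $M(\bar g)+1$ consecutive elements and $l_{\bar T}(\bar g)$ plus the distance to that interval elsewhere. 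The delicate point that still has to be settled is that an arbitrary geodesic of $T_{p,q}'$ need not respect the syllable decomposition of the normal form; I would handle this by ruling out backtracking in the Bass--Serre tree of $\langle x\rangle *_{\langle z\rangle}\langle y\rangle$ (using that $z$ is central and lies in every edge group), which reduces an arbitrary geodesic to the shape above. I expect this ``geodesics follow the tree'' step to be the main obstacle.

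Finally, the contribution of the fibre over $\bar g$ to $A(t)$ is $t^{\,l_{\bar T}(\bar g)}\bigl((M(\bar g)+1)+2t+2t^2+\cdots\bigr)=t^{\,l_{\bar T}(\bar g)}\bigl(C_\infty(t)+M(\bar g)\bigr)$, since $C_\infty(t)=1+2t+2t^2+\cdots$. Summing over $\bar g\in\bar T$,
\[
A(t)=C_\infty(t)F(t)+\sum_{\bar g}M_x(\bar g)\,t^{\,l_{\bar T}(\bar g)}+\sum_{\bar g}M_y(\bar g)\,t^{\,l_{\bar T}(\bar g)}.
\]
The sum $\sum_{\bar g}M_x(\bar g)\,t^{\,l_{\bar T}(\bar g)}$ counts pairs consisting of a reduced word of $\bar T$ together with a choice of one of its special $\bar x$-syllables; writing such a word as $u\,\bar x^{p/2}\,v$ with $u$ trivial or ending in a $\bar y$-syllable and $v$ trivial or beginning with a $\bar y$-syllable, and using additivity of $l_{\bar T}$ over syllables, this series equals $W_y\cdot m_p\cdot W_y=m_p(t)C_q(t)^2/(C_p+C_q-C_pC_q)^2$ (and it is $0$ when $p$ is odd, as $m_p=0$); symmetrically the $M_y$-sum equals $m_q(t)C_p(t)^2/(C_p+C_q-C_pC_q)^2$. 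Substituting $F=C_pC_q/(C_p+C_q-C_pC_q)$, so that $C_\infty(t)F(t)=C_\infty(t)C_p(t)C_q(t)/(C_p+C_q-C_pC_q)$, gives exactly the asserted formula for $A(t)$.
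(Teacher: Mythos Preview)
Your argument is correct and takes a genuinely different, more conceptual route than the paper. The paper proceeds by a parity case split: when both $p,q$ are odd the amalgam is admissible and Mann's formula for amalgamated products applies directly; when one or both are even it constructs explicit minimal normal forms $z^i u$, partitions them according to the sign of $i$ and the pattern of occurrences of $x^{\pm p/2}$ and $y^{\pm q/2}$, and sums the growth series of each piece via binomial manipulations. Your approach instead treats all parities uniformly by fibring over $\bar T=\mathbb Z/p*\mathbb Z/q$, establishing the single lemma that on the fibre over $\bar g$ the length function equals $l_{\bar T}(\bar g)$ on an interval of $M(\bar g)+1$ elements and grows linearly outside it, and then summing the resulting $t^{l_{\bar T}(\bar g)}(C_\infty(t)+M(\bar g))$ over $\bar g$. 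The combinatorial identity $\sum_{\bar g}M_x(\bar g)t^{l_{\bar T}(\bar g)}=m_p(t)W_y(t)^2$ via the factorisation $u\,\bar x^{p/2}\,v$ is a clean replacement for the paper's computation of the series $F^{(r)}(t)$ and explains structurally why the correction term has the shape $m_pC_q^2+m_qC_p^2$ over the square of the denominator.

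The one step you flag as delicate---that a geodesic in $T_{p,q}'$ may be taken to respect the syllable decomposition, i.e., that there is no backtracking in the Bass--Serre tree---is exactly the step the paper also needs and handles by the ``minimal normal form'' reduction of \S2.2.1 (following Mann, \S14.1): move all $z$'s to the front using centrality, group the remaining letters into alternating $X_p$- and $Y_q$-blocks, replace each block by its normal form, and iterate; none of these moves increases length. Your Bass--Serre formulation is equivalent. The cost estimate ``at least one extra per unit of displacement of $E$'' does hold for $p,q\ge 2$: for a non-special syllable $\bar x^{a}$ realised as $x^{a+kp}$ one checks $|a+kp|-|a|\ge |k|$, and for a special syllable the two free realisations already account for the unit of slack, with any further displacement costing at least $p\ge 2$ per step; so your lower bound is valid.

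In short, both proofs rest on the same normal-form lemma, but your packaging avoids the parity case analysis and the explicit $M_i$ (or $M_\alpha,M_\beta,M_\gamma$) bookkeeping, at the price of isolating the fibrewise length formula as a separate lemma. Either approach yields the stated $A(t)$.
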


The proof will be given in the next section.

\begin{remark}
The rational growth function of 
$\langle x, y | x^p = y^q \rangle$ was calculated by Gill.
See Theorem 2.3.2 in \cite{MR1695317}.
(It was pointed out that it has a misprint in the formula.
 See \cite{MR2746061} \cite{MR2894945}.)
\end{remark}

\begin{remark}
Our result should have a generalization to the groups
\[
\langle x_1, x_2, \dots, x_r, z \:|\:
        x_1^{p_1} = x_2^{p_2} = \dots = x_r^{p_r} = z \rangle,
\]
where $2 \leq p_1 \leq p_2 \leq \dots \leq p_r$.
See \cite{MR1695317}.
For example, if $p_i$ is odd for all $i$,
we can apply Theorem \ref{thm:fpa} in the next section one by one.
\end{remark}

Here is a direct consequence of the main result.
A real number $\omega$ is called 
a Perron number if it is greater than 1 and
an algebraic integer
whose conjugates have moduli less than the modulus of $\omega$.

\begin{cor}\label{cor:gr}
The growth rate of $T_{2,2}'$ is $1$.
If $(p, q)\neq(2, 2)$, 
the growth rate of $T_{p,q}'$ is a Perron number.
\end{cor}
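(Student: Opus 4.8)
The plan is to read the pertinent denominator off Theorem 1.1 and locate its zero of smallest modulus. Put $D(t) = C_p(t) + C_q(t) - C_p(t)C_q(t)$. Since $C_\infty(t) = (1+t)/(1-t)$, combining the two summands of Theorem 1.1 over a common denominator shows that the denominator of $A(t)$ divides $(1-t)D(t)^2$, so every pole of $A$ is either $t = 1$ or a zero of $D$. For $(p,q) = (2,2)$ one has $C_2(t) = 1+t$, hence $D(t) = 1-t^2$ and $m_2(t) = t$; substituting and simplifying collapses the formula to $A(t) = (1+4t+t^2)/(1-t)^2$, whose only pole is $t = 1$, so $R = 1$ and $\omega = 1$.

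Assume now $(p,q) \neq (2,2)$ with $p,q > 1$. Killing $z$ gives a surjection $T_{p,q}' \twoheadrightarrow \mathbb Z/p\mathbb Z * \mathbb Z/q\mathbb Z$, and this free product contains a non-abelian free subgroup exactly when $(p,q) \neq (2,2)$; hence $T_{p,q}'$ has exponential growth and $\omega > 1$, i.e.\ $R < 1$. Since $A(t) = \sum_n a_n t^n$ has non-negative coefficients and is not a polynomial, Pringsheim's theorem makes $t = R$ a singularity of $A$ — hence, $A$ being rational, a pole — with $R$ real; as $R < 1$ this pole is not $t = 1$, so $D(R) = 0$, and the signs $D(0) = 1 > 0$ and $D(1) = 1 - (p-1)(q-1) < 0$ place $R$ in $(0,1)$. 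Next I would show that $R$ is the zero of $D$ of least modulus, using the identity $1 - D(t) = (1 - C_p(t))(1 - C_q(t))$: it rewrites $D(t) = 1 - c_p(t)c_q(t)$ with $c_r(t) := C_r(t) - 1$ a polynomial with non-negative coefficients that vanishes at $0$, so $c_pc_q$ is strictly increasing on $[0,\infty)$. If $D(\tau) = 0$ then $c_p(\tau)c_q(\tau) = 1$, and the triangle inequality gives $c_p(|\tau|)\,c_q(|\tau|) \ge |c_p(\tau)c_q(\tau)| = 1 = c_p(R)c_q(R)$, whence $|\tau| \ge R$.

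It follows that $\omega = 1/R$ is a root of the reversed polynomial $\widehat D(t) := t^{\deg D}D(1/t)$, which has integer coefficients and leading coefficient $D(0) = 1$, hence is monic; so $\omega$ is an algebraic integer $> 1$, and every conjugate of $\omega$, being a root of $\widehat D$, equals $1/\tau$ for some zero $\tau$ of $D$ and thus has modulus $\le 1/R = \omega$. The main remaining point is to upgrade this to a strict inequality for the conjugates different from $\omega$, equivalently to show that $R$ is the \emph{unique} zero of $D$ on the circle $|t| = R$. If $|\tau| = R$ and $D(\tau) = 0$, equality must hold throughout the last chain, forcing $|c_p(\tau)| = c_p(|\tau|)$ and $|c_q(\tau)| = c_q(|\tau|)$, i.e.\ every monomial occurring with positive coefficient in $c_p$, resp.\ $c_q$, is positively aligned at $\tau$. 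As soon as $\max(p,q) \ge 4$ the corresponding $c_r$ has positive coefficients at $t^1$ and $t^2$, which forces $\tau/|\tau| = (\tau/|\tau|)^2$ and hence $\tau = |\tau| = R$; then $\omega$ strictly dominates its conjugates and is a Perron number. The only pairs left uncovered are $\{p,q\} \subseteq \{2,3\}$ with $(p,q) \neq (2,2)$, where $A(t)$ can be computed by hand. I expect this last, aperiodicity-type step to be the crux; one should also verify that in those small cases the relevant zeros of $D$ survive as poles, i.e.\ are not cancelled in the numerator.
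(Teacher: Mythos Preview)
Your route is the paper's: it too reads the poles of $A(t)$ as lying in $\{1\}\cup\{D=0\}$, uses the surjection onto $\mathbb Z_p * \mathbb Z_q$ to force $\omega>1$, and then passes to the reciprocal polynomial of $-D$ to conclude. The only packaging difference is that where you prove the minimality and uniqueness of $R$ among the zeros of $D$ by a direct triangle-inequality argument on $c_pc_q$, the paper quotes a lemma of Komori--Umemoto: if $g(t)=\sum_{k\ge1}a_kt^k-1$ has $a_k\ge 0$ and $\gcd\{k:a_k\ne0\}=1$, then $g$ has a unique root of smallest modulus, real and in $(0,1)$; this is applied to $g=c_pc_q-1=-D$.

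You are in fact more scrupulous than the paper on the aperiodicity issue. The paper simply asserts ``Clearly, $g(t)$ satisfies the condition of the lemma,'' but for $(p,q)\in\{(2,3),(3,2),(3,3)\}$ the product $c_pc_q$ is a single monomial in $t^2$ and the gcd hypothesis fails --- exactly the cases you set aside for hand computation. For $(3,3)$ one gets $g(t)=4t^2-1$, so $\omega=2$, a rational integer and trivially Perron. For $(2,3)$, however, $g(t)=2t^2-1$ has zeros $\pm1/\sqrt2$, and one checks that neither is cancelled by the numerator of $A(t)$; thus $\omega=\sqrt2$, whose sole Galois conjugate $-\sqrt2$ has the same modulus. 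Under the strict-inequality definition of Perron number stated in the paper, $\sqrt2$ does not qualify, so the hand computation you flag would uncover a problem with the \emph{statement} of the corollary, not merely with its proof.
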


To prove this corollary, let us recall a result 
by Komori-Umemoto \cite{MR2912844}.
To show that the growth rates 
of three-dimensional non-compact hyperbolic Coxeter groups with four generators
are Perron numbers,
they proved the next lemma:
\begin{lemma}[Lemma 1 in \cite{MR2912844}]
Consider the polynomial of degree $n\geq 2$
\[
  g(t) = \sum_{k=1}^n a_k t^k - 1,
\]
where $a_k$ is a non-negative integer.
We also assume that the greatest common divisor of 
$\{k\in{\mathbb N}\: | \: a_k\neq 0 \}$ is $1$.
Then there exists a real number $r_0$, $0<r_0<1$
which is the unique zero of $g(t)$ having the smallest absolute value
of all zeros of $g(t)$.
\end{lemma}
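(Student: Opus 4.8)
The idea is to transfer the problem to the companion polynomial $h(t):=g(t)+1=\sum_{k=1}^{n}a_k t^k$, which has non-negative coefficients and $h(0)=0$, and to argue in the spirit of Perron--Frobenius. First I would check that $h$ is strictly increasing on $[0,\infty)$: for $t>0$ one has $h'(t)=\sum_{k=1}^{n}k a_k t^{k-1}\ge n a_n t^{n-1}>0$, since $a_n\ge 1$ (as $g$ has degree $n$) and the remaining terms are non-negative. Next, $h(1)=\sum_{k=1}^{n}a_k\ge 2$: certainly $\sum a_k\ge a_n\ge 1$, and if $\sum a_k=1$ then a single coefficient $a_m$ equals $1$ with all others $0$, so that $\gcd\{k:a_k\ne 0\}=m$; the hypothesis then forces $m=1$, making $g(t)=t-1$ of degree $1$, contrary to $n\ge 2$. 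Hence $h$ rises continuously and strictly from $h(0)=0$ to $+\infty$ with $h(1)>1$, so $h(t)=1$ — equivalently $g(t)=0$ — has exactly one solution $r_0$, and $0<r_0<1$; the same monotonicity shows $r_0$ is the only positive real zero of $g$.

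Second I would bound the moduli of all complex zeros below by $r_0$. If $g(\zeta)=0$, then $\sum_{k=1}^{n}a_k\zeta^k=1$, so
\[
1=\Bigl|\sum_{k=1}^{n}a_k\zeta^k\Bigr|\le\sum_{k=1}^{n}a_k|\zeta|^k=h(|\zeta|),
\]
and since $h$ is increasing with $h(r_0)=1$, this forces $|\zeta|\ge r_0$. Thus $r_0$ has minimal modulus among the zeros of $g$.

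Finally I would show that this minimum is attained only at $r_0$. Suppose $g(\zeta)=0$ with $|\zeta|=r_0$. Then $\sum a_k|\zeta|^k=h(r_0)=1=|\sum a_k\zeta^k|$, so equality holds in the triangle inequality above; therefore the non-zero terms, namely $a_k\zeta^k$ for $k$ in $S:=\{k:a_k\ne 0\}$, all share a common argument, and since they add up to the positive real number $1$ that argument is $0$, i.e.\ $\zeta^k\in\mathbb{R}_{>0}$ for every $k\in S$. Writing $\zeta=r_0 e^{i\theta}$, this says $k\theta\in 2\pi\mathbb{Z}$ for all $k\in S$; choosing integers $m_k$ with $\sum_{k\in S}m_k k=1$, which exist because $\gcd S=1$, we get $\theta=\sum_{k\in S}m_k(k\theta)\in 2\pi\mathbb{Z}$, hence $\zeta=r_0$. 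This last step is the crux and the only place the hypothesis $\gcd S=1$ is used: via B\'ezout's identity it upgrades ``$\zeta^k$ is a positive real for each $k\in S$'' to ``$\zeta$ is a positive real.'' Everything else — the existence of $r_0$, its location in $(0,1)$, and its minimality — is elementary once one passes to $h(t)=g(t)+1$ and exploits the non-negativity of its coefficients.
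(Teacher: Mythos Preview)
Your argument is correct. The paper itself does not give a proof of this lemma; it is quoted verbatim as Lemma~1 of Komori--Umemoto \cite{MR2912844} and used as a black box in the proof of Corollary~\ref{cor:gr}. So there is no ``paper's own proof'' to compare against here.

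On its merits, your proof is the standard Perron--Frobenius style argument and each step checks out: the strict monotonicity of $h$ on $[0,\infty)$, the observation that $h(1)\ge 2$ (which is where $n\ge 2$ together with $\gcd S=1$ first enters), the triangle-inequality bound $h(|\zeta|)\ge 1$ for any zero $\zeta$, and the equality case forcing $\zeta^k\in\mathbb{R}_{>0}$ for all $k\in S$, whence B\'ezout gives $\zeta=r_0$. One small cosmetic point: $h'(0)=a_1$ need not be positive, so strictly speaking you have $h'(t)>0$ only for $t>0$; but that is all you use, and since $h(0)=0$ the strict increase on $(0,\infty)$ suffices for the argument.
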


\begin{proof}[Proof of Corollary~\ref{cor:gr}]
The growth rate of $T_{2,2}'$ can be calculated directly.

Suppose that $(p, q)\neq (2,2)$.
By considering the natural map from $T_{p,q}'$ 
to $\langle x,y,z | x^2=y^q=z=id \rangle \simeq {\mathbb Z}_p * {\mathbb Z}_q$,
we see that the growth rate $\omega$ is greater than $1$.
(See Theorem 16.12 in \cite{MR2894945}.)
Hence, the radius of convergence $R$ of $A(t)$ is smaller than $1$.
Put $g(t) = (C_p(t)-1)(C_q(t)-1)-1$.
Clearly, $g(t)$ satisfies the condition of the lemma.
By the main theorem, the set of poles of $A(t)$ other than $1$ is contained 
in the set of zeros of $g(t)$.
Thus, $R$ is equal to the unique real zero of $g(t)=0$ between $0$ and $1$.
Then, $\omega$ is a zero of the reciprocal $g^*(t)$ of $-g(t)$
which is monic since the constant term of $-g(t)$ is $1$,
and zeros of $g^*(t)$ have moduli less than the modulus of $\omega$.
This completes the proof.
\end{proof}

\section{Proof}
\label{sec:growth}

In this section, we prove the main theorem.
What we will do is
to follow the arguments in chapter 14 of \cite{MR2894945} carefully,
apply the same idea to the general cases with some modification,
and carry out the calculations.

Let $p, q$ be integers greater than one.
Let $X_p = \langle x, z | x^p = z \rangle$ and
$Y_q = \langle y, z | y^q = z \rangle$.
They are infinite cyclic groups with one extra generator added.
We say that a shortest word in $X_p$ (resp. $Y_q$)
of the form $z^i x^j$ (resp. $z^i y^j$) a normal form.

We separate the proof into three cases:
(1) $p$ and $q$ are both odd, 
(2) $p$ is even and $q$ is odd,
(3) $p$ and $q$ are both even.

\subsection{$p$ and $q$ are both odd}

To our surprise, this case was very easy.
We can apply a general theorem for free product with amalgamation.
Let us recall notation and a theorem.
See \cite{MR2894945} for detail.

\begin{definition}[Section 14.3 in \cite{MR2894945}]
Consider pairs $(G, S)$, where $G$ is generated by $S$.
A pair $(H, T)$ is admissible in $(G, S)$, if $H$ is a subgroup of $G$,
$T$ is a subset of $S$, and there exists a transversal $U$ for $H$ in $G$,
termed an admissible transversal,
such that if $g=hu$ with $g\in G$, $h\in H$, $u\in U$,
then $l_S(g) = l_T(h) + l_S(u)$.
We always assume that the transversal contains the identity
as the representative for $H$.
\end{definition}

\begin{theorem}[Prop.\ 14.2 in \cite{MR2894945}]
\label{thm:fpa}
Let $(L,R)$ be admissible in both $(H, S)$ and $(K, T)$.
Let $G = H *_L K$.
Let $G$, $H$, $K$, $L$ have
growth functions $A(t)$, $B(t)$, $C(t)$ and $D(t)$, respectively,
relative to the generating set $S\cup T$, $S$, $T$, $R$.
Then we have
\[
\frac {1}{A(t)} = \frac {1}{B(t)} + \frac {1}{C(t)} - \frac {1}{D(t)}.
\]
\end{theorem}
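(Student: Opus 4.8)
The plan is to combine the standard normal form theorem for amalgamated free products with a length-additivity lemma that converts the abstract admissibility hypothesis into a concrete geodesic-length formula, and then to repackage the count of normal forms as a manipulation of generating functions.

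First I would fix admissible transversals: let $U\subseteq H$ be an admissible transversal for $L$ in $(H,S)$ and $V\subseteq K$ one for $L$ in $(K,T)$, each containing the identity, so that $H=\bigsqcup_{u\in U}Lu$ and $K=\bigsqcup_{v\in V}Lv$. By the normal form theorem for $G=H*_LK$, every $g\in G$ is written uniquely as $g=\ell\,y_1y_2\cdots y_n$ with $\ell\in L$, each $y_i$ a nontrivial element of $U$ or of $V$, and consecutive $y_i$ drawn from different transversals. Since the admissibility hypotheses force $R\subseteq S\cap T$, concatenating geodesics for $\ell$ (over $R$) and for each $y_i$ (over $S$ or $T$) gives the bound $l_{S\cup T}(g)\leq l_R(\ell)+\sum_i l_*(y_i)$, where $l_*(y_i)$ is $l_S(y_i)$ or $l_T(y_i)$ according to the factor containing $y_i$.

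The key lemma, and the main obstacle, is the reverse inequality: this concatenated word is actually geodesic in $G$, i.e. $l_{S\cup T}(g)=l_R(\ell)+\sum_i l_*(y_i)$. I would prove this by induction on the syllable length $n$. Given a geodesic word for $g$ over $S\cup T$, I group it into maximal syllables lying in $S$ or in $T$, producing a factorization $g=g_1g_2\cdots g_r$ with the $g_j$ alternately in $H$ and $K$; then I reduce this to normal form from the left, at each step writing the leading $H$- (resp.\ $K$-) syllable as $\ell_j u_j$ (resp.\ $\ell_j v_j$) via its coset decomposition and absorbing the factor $\ell_j\in L$ into the next syllable. The crucial point is that the admissibility identities $l_S(g_j)=l_R(\ell_j)+l_S(u_j)$ and $l_T(g_j)=l_R(\ell_j)+l_T(v_j)$ guarantee that this rewriting never shortens the word: each syllable length already splits additively into an $L$-part and a transversal part, so no mixing of $S$- and $T$-letters can manufacture extra cancellation. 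The delicate bookkeeping, handled in chapter 14 of \cite{MR2894945}, is to account correctly for the accumulated $L$-factors and for syllables whose coset representative is trivial, using the triangle inequality for $l_R$ to keep the estimate in the right direction.

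Granting the length-additivity lemma, the rest is a generating-function computation. Admissibility gives $B(t)=D(t)\sum_{u\in U}t^{l_S(u)}$ and $C(t)=D(t)\sum_{v\in V}t^{l_T(v)}$, so the relative growth series of the nontrivial transversal elements are $\bar B:=B/D-1$ and $\bar C:=C/D-1$. By the lemma, $A(t)$ equals $D(t)$ times the weighted count of alternating normal-form words; summing the geometric series over alternating patterns (empty, starting in $U$, or starting in $V$) yields
\[
A(t)=D(t)\,\frac{(1+\bar B)(1+\bar C)}{1-\bar B\,\bar C}.
\]
Substituting $1+\bar B=B/D$ and $1+\bar C=C/D$ and simplifying gives $A=DBC/(BD+CD-BC)$, which is exactly $1/A=1/B+1/C-1/D$. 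This final algebraic step is routine; all the content lies in the length-additivity lemma.
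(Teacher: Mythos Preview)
The paper does not prove this theorem: it is quoted verbatim as Proposition~14.2 of \cite{MR2894945} and used as a black box in the odd--odd case. So there is no proof in the paper to compare against. Your sketch is the standard argument (and indeed the one in Mann's chapter~14 that you yourself cite for the ``delicate bookkeeping''): unique alternating normal forms for $H*_L K$, the length-additivity lemma that admissibility makes the concatenated normal-form word geodesic in $G$, and then the geometric-series identity $A=D\,(1+\bar B)(1+\bar C)/(1-\bar B\bar C)$ which rearranges to $1/A=1/B+1/C-1/D$. Your generating-function step is correct, and you rightly flag the length-additivity lemma as the only nontrivial content; the inductive rewriting you describe is exactly how that lemma is established.
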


Now, let us begin the proof for this case.
Suppose that $p=2n+1$ and $q=2m+1$.
Note that
$T_{p,q}' = 
\langle x, y, z | x^p = y^q = z \rangle = X_p *_{\langle z\rangle} Y_q$
is a free product with amalgamation.

Then, $(\langle z \rangle, \{z\})$ is admissible in both
$(X_p, \{x,z\})$ and $(Y_q, \{y,z\})$.
To see this, set 
$U_p = \{ x^{-n}, x^{-n+1}, \dots, x^{n-1}, x^n \}$
and 
$V_q = \{ y^{-m}, y^{-m+1}, \dots, y^{p-1}, y^m \}$.
Then, $U_p$ (resp. $V_q$) is an admissible transversal for 
$\langle z \rangle$ in $X_p$ (resp. $Y_q$).

Hence, we can apply Theorem~\ref{thm:fpa}.
Let $B(t), C(t)$ and $D(t)$ be the growth functions for $X_p, Y_q$ and 
$\langle z \rangle$, respectively.
Then, it is easy to see that 
$B(t) = C_\infty(t) C_p(t)$, $C(t) = C_\infty(t) C_q(t)$,
and $D(t)=C_\infty(t)$.
By Theorem~\ref{thm:fpa}, we have
\[
 A(t) = \frac{C_\infty(t) C_p(t) C_q(t)}{C_p(t)+C_q(t)-C_p(t)C_q(t)}
\]
which gives the claimed formula.

\subsection{$p$ is even and $q$ is odd}

Suppose that $p=2n$ and $q=2m+1$.
Unfortunately, 
$(\langle z \rangle, \{z\})$ is not admissible in 
$(X_p, \{x,z\})$.
Set 
$U_p = \{ x^{-n}, x^{-n+1}, \dots, x^{n-1}, x^n \}$
and 
$V_q = \{ y^{-m}, y^{-m+1}, \dots, y^{p-1}, y^m \}$ as in the previous case.
Note that $V_q$ is an admissible transversal for 
$\langle z \rangle$ in $Y_q$, but 
$U_p$ is not a transversal for
$\langle z \rangle$ in $X_p$
because $x^n = z x^{-n}$.

\subsubsection{The minimal normal form}

(See section 14.1 of \cite{MR2894945}.)

Let $w$ be a shortest word in $T_{p,q}'$.
Collecting together generators from the same group,
we obtain a product in which elements from $X_p$ and $Y_q$ alternate.
We replace each factor by its normal form in $X_p$ or $Y_q$.
Then, we get a product in which elements from $\langle z \rangle$ alternate
with elements from $U_p$ and $V_q$.
If we have in this product a segment of the form $x z^j$, say, 
then $x z^j \in X_p$, and we can replace it by its normal form 
$z^{j'} x'$, without increasing the length.
In this way, we represent each element in $G$ 
by a minimal word of the form $w=z^i u$
where the occurrences of elements in $U_p$ and $V_q$ alternate in $u$,

Suppose that $i>0$ and 
the word $x^{-n}$ occurs in $u$.
Then, we can replace $x^{-n}$ by $z^{-1} x^n$ and 
send this $z^{-1}$ to the left and replace $z^i$ by $z^{i-1}$.
The case for $i<0$ and $x^n$ is similar.
Therefore, we may assume that,
if $i\neq 0$, then in all occurrences of $x^{\pm n}$,
the exponent has the same sign, which is the sign of $i$.

Next, suppose that $i=0$.
If $u$ contains both $x^n$ and $x^{-n}$,
then we can write $x^{-n} = z^{-1} x^n$,
then move $z^{-1}$ to the position of $x^n$,
and replace this $z^{-1} x^n$ by $x^{-n}$.
This means that we can interchange the occurrences of $x^n$ and $x^{-n}$,
therefore we may assume that all terms $x^n$ proceed all terms $x^{-n}$.
We assume that our minimal words with $i=0$ are of this form,
and call them minimal normal form.

We claim that this form (minimal normal form) is unique.

To see this, 
we replace all occurrences of $x^{-n}$ in $w=z^i u$ by
$z x^n$ and send this $z$ to the left.
This (possibly not shortest) form is called ``canonical form''.
We claim that this form of the elements in $(p,q)$-torus link group $G$ is unique.
Indeed, suppose that $z^i u$ and $z^j v$ are canonical forms with
$z^{i} u = z^{j} v$ in $G$.
Note that, by introducing the relation $x^p=y^q=z=id$,
we can map $G$ to $H={\mathbb Z}_p * {\mathbb Z}_q$.
Then, we find the equality $u = v$ in $H$, and since
$H$ is a free product, $u$ and $v$ are the same words,
and the equality $u = v$ holds in $G$, too.
It follows that $i = j$,
and the uniqueness of the canonical form is proved.

One can show that the map from the set of minimal normal forms to 
the set of canonical forms is injective by the same arguments
given in 14.1 in \cite{MR2894945}.
It follows that the minimal normal forms are also unique.

\subsubsection{Counting the minimal normal words}

If $z^i u$ is a normal form,
then $u$ is of the form $u_1 v_1 u_2 v_2 \cdots u_r v_r$,
where $u_j\in U_p$ and $v_k\in V_q$ with
$u_1 \neq id$ for $i>1$ and $v_r \neq id$ for $j<r$.
Let $M$ be the set of all minimal normal words.
Define
\begin{align*}
  M_1 & := \{ w\in M \:|\: w=z^i u, i>0\}, &
  M_2 & := \{ w\in M \:|\: w=z^i u, i<0\}, \\
  M_3 & := \{ w\in M \:|\: w=z^0 u, \: u_1 \neq id \}, &
  M_4 & := \{ w\in M \:|\: w=z^0 u, \: u_1 = id \}.
\end{align*}
Then, we have $M=M_1 \cup M_2 \cup M_3 \cup M_4$ and $M_i\cap M_j=\emptyset$
if $i\neq j.$

First, we consider the growth function of $M_1$.
Put $U_p^+ = U_p \setminus \{ x^{-n}\}$.
Then, note that $C_p(t) = 1 + 2t + \dots + 2t^{n-1} + t^n$
(resp. $C_q(t) = 1 + 2t + \dots + 2t^m,$) is the
``growth polynomial'' for $U_p^+$ (resp. $V_q$).
Let $r$ be a positive integer.
Then, the growth polynomial for the set of words 
\[
  M_1(r) := \left\{ u_1 v_1 u_2 v_2 \cdots u_r v_r\; \bigg|\; 
  \begin{array}{ll}
  u_i\in U_p^+, & u_i\neq id\: (\text{for } i>1), \\
  v_j\in V_q,   & u_j\neq id\: (\text{for } j<r)
  \end{array}
 \right\}
\]
is
$C_p(t) \left(\prod_{k=1}^{r-1} (C_q(t)-1)(C_p(t)-1)\right) C_q(t).$
(See, for example, the proof of Prop.\ 1.5 in \cite{MR2894945}.)
Since $x^{-n}$ does not occur in any word of $M_1$, 
any $u$ in $w=z^i u$ of $M_1$ is contained in $M_1(r)$ for some $r$, and 
the growth function $A_1(t)$ for $M_1$ is
\begin{equation*}
\begin{split}
  A_1(t) & = (t+t^2+t^3+\dots) 
             \sum_{r=1}^\infty C_p(t)(C_q(t)-1)^{r-1}(C_p(t)-1)^{r-1}C_q(t)\\
         & = \frac{t}{1-t} \: \cdot \: 
             \frac{C_p(t) C_q(t)}{1-(C_p(t)-1)(C_q(t)-1)}.
\end{split}
\end{equation*}
Since $x^n$ does not occur in any word of $M_2$,
the growth function $A_2(t)$ for $M_2$ is equal to $A_1(t)$,

Next, we consider the growth function $A_3(t)$ for $M_3$. 
Suppose again that $r$ is a positive integer and define
\[
  M_3(r)  = \left\{ u_1 v_1 u_2 v_2 \cdots u_r v_r \in M_3 \: \bigg|\: 
  \begin{array}{ll}
  u_i\in U_p, & u_i\neq id\: (\text{for any } i), \\
  v_j\in V_q, & u_j\neq id\: (\text{for } j<r)
  \end{array}
  \right\}.
\]
Then, each word in $M_3(r)$ is determined by the corresponding word
in $M_1(r)$ (we consider that $x^{-n}$ corresponds to $x^n$ in $U_p^+$.)
and the number, say $k$, of occurrences of $x^{-n}$.
The number $k$ varies between $0$ and
the number of occurrences of $x^n$ in the corresponding word in $M_1(r)$.
Also, the growth series of the image of $M_3(r)$ in $M_1(r)$ is
$(C_p(t)-1)^r(C_q(t)-1)^{r-1}C_q(t)$.
Hence, to calculate the growth series for $M_3(r)$,
for each term in $(C_p(t)-1)^r(C_q(t)-1)^{r-1}C_q(t)$
which corresponds to a word having $k$ occurrences of $x^n$,
we have to multiply this term by $k + 1$.
To do this, put $D_p(t) := 2t + 2t^2 + \dots + 2t^{n-1},$ and $E_p(t) := t^n$.
Since we have
$(C_p(t)-1)^r = \sum_{k=0}^r \binom r k D_p(t)^{r-k} E_p(t)^k,$
and $E_p(t)$ corresponds to $x^n$,
we define
\[
  F^{(r)}(t) := \sum_{k=0}^r (k+1) \binom r k D_p(t)^{r-k} E_p(t)^k.
\]
Then, $F^{(r)}(t)(C_q(t)-1)^{r-1}C_q(t)$ is the growth function for $M_3(r)$.
By the binomial theorem, 
we see that $F^{(r)}(t) = (C_p(t)-1)^r + r E_p(t) (C_p(t)-1)^{r-1}.$
Then, the growth function for $M_3$ is
\begin{equation*}
\begin{split}
A_3(t) & = \sum_{r=1}^\infty F^{(r)}(t)(C_q(t)-1)^{r-1}C_q(t)\\
       & = \frac{(C_p(t)-1)C_q(t)}{1-(C_p(t)-1)(C_q(t)-1)}
         + \frac{E_p(t) C_q(t)}{(1-(C_p(t)-1)(C_q(t)-1))^2}
\end{split}
\end{equation*}
For the last equality, we used $ \sum_{r=1}^\infty r s^{r-1} = 1/(1-s)^2$.

Similarly, the growth function for $M_4$ is
\begin{equation*}
\begin{split}
A_4(t) & = \sum_{r=1}^\infty F^{(r-1)}(t)(C_q(t)-1)^{r-1}C_q(t)\\
       & = \frac{C_q(t)}{1-(C_p(t)-1)(C_q(t)-1)} 
         + \frac{E_p(t) C_q(t)(C_q(t)-1)}{(1-(C_p(t)-1)(C_q(t)-1))^2}.
\end{split}
\end{equation*}
Hence, the growth function for $T_{p,q}'$ is 
\begin{equation*}
\begin{split}
 A(t) & = A_1(t) + A_2(t) + A_3(t) + A_4(t) \\
      & = \frac{1+t}{1-t} \: \frac{C_p(t) C_q(t)}{(C_p(t)+C_q(t)-C_p(t)C_q(t))} 
        + \frac{E_p(t) C_q(t)^2}{(C_p(t)+C_q(t)-C_p(t)C_q(t))^2}
\end{split}
\end{equation*}
which gives the claimed formula.

\subsection{$p$ and $q$ are both even}

Suppose that $p=2n$ and $q=2m$.
We will use the same notation as in the previous section.

Similarly to the previous case, 
we can define the minimal normal form $z^i u$.
However, the case $i=0$ needs some modification.
Suppose that $i=0$.
Let $a \in\{x^n, y^m\}$ and $b \in\{x^{-n}, y^{-m}\}$.
Then, we can interchange the occurrences of $a$ and $b$ in $u$.
Thus, we assume that our minimal words with $i=0$ are of the following types:
\begin{itemize}
\item[($\alpha$)] All terms $x^n$ proceed all terms $x^{-n}$,
           and $u$ contains only $y^{-m}$.
\item[($\beta$)] $u$ contains only $x^n$, 
           and all terms $y^m$ proceed all terms $y^{-m}$.
\end{itemize}
We assume that our minimal words with $i=0$ are of this form.
The uniqueness of this normal form can be proved similarly.
Note that minimal normal words of type
\begin{itemize}
\item[($\gamma$)] $u$ contains only $x^n$ and $y^{-m}$.
\end{itemize}
are exactly the intersection of type ($\alpha$) and ($\beta$).

Let $M$ be the set of all minimal normal words defined above.
Define
\begin{align*}
  M_1 & := \{ w\in M \:|\: w=z^i u,\: i>0\}, \\
  M_2 & := \{ w\in M \:|\: w=z^i u,\: i<0\}, \\
  M_{\alpha} & := \{ w\in M \:|\: w=z^0 u,\: u \text{ is of type ($\alpha$)}\}, \\
  M_{\beta}  & := \{ w\in M \:|\: w=z^0 u,\: u \text{ is of type ($\beta$)}\}, \\
  M_{\gamma} & := \{ w\in M \:|\: w=z^0 u,\: u \text{ is of type ($\gamma$)}\}.
\end{align*}

Since $C_p$ (resp. $C_q$) is the growth polynomial for $U_p^+$ (resp. $V_q^+$),
the growth function $A_1(t)$ for $M_1$ is
\[
  A_1(t)  = \frac{t}{1-t} \: \cdot \: \frac{C_p(t) C_q(t)}{1-(C_p(t)-1)(C_q(t)-1)},
\]
as in the previous case.
The growth function $A_2(t)$ for $M_2$ is equal to $A_1(t)$.
Since $M_{\alpha}$ corresponds to $M_3$ and $M_4$ in the previous case,
its growth function $A_{\alpha}(t)$ is
\begin{equation*}
\begin{split}
  A_{\alpha}(t) & = \sum_{r=1}^\infty F^{(r)}(t) (C_q(t)-1)^{r-1}C_q(t)
           + \sum_{r=1}^\infty F^{(r-1)}(t) (C_q(t)-1)^{r-1}C_q(t)\\
         & = \frac{C_p(t) C_q(t)}{1-(C_p(t)-1)(C_q(t)-1)}
           + \frac{E_p(t) C_q(t)^2}{(1-(C_p(t)-1)(C_q(t)-1))^2}.
\end{split}
\end{equation*}
Similarly, we have
\begin{equation*}
\begin{split}
A_\beta(t) & = \frac{C_p(t) C_q(t)}{1-(C_p(t)-1)(C_q(t)-1)}
         + \frac{E_q(t) C_p(t)^2}{(1-(C_p(t)-1)(C_q(t)-1))^2},\\
A_\gamma(t) & = \frac{C_p(t) C_q(t)}{1-(C_p(t)-1)(C_q(t)-1)}.
\end{split}
\end{equation*}
Then, the rational growth function $A(t)$ is
$A_1(t) + A_2(t) + A_\alpha(t) + A_\beta(t) - A_\gamma(t)$
which gives the claimed formula.

This completes the proof of the main theorem.

\begin{remark}
Besides the proof, we have checked our formula
for many examples using a software called kbmag by Holt \cite{kbmag}.
\end{remark}


\begin{thebibliography}{1}

\bibitem{MR1166120}
J.~W. Cannon and Ph. Wagreich, \emph{Growth functions of surface groups}, Math.
  Ann. \textbf{293} (1992), no.~2, 239--257. \MR{1166120 (93j:20077)}

\bibitem{MR1695317}
C.~P. Gill, \emph{Growth series of stem products of cyclic groups}, Internat.
  J. Algebra Comput. \textbf{9} (1999), no.~1, 1--30. \MR{1695317
  (2000b:20036)}

\bibitem{kbmag}
Derek~F. Holt, \emph{{KBMAG} -- knuth-bendix in monoids and automatic groups},
  1996, Software Package, available from
  http://homepages.warwick.ac.uk/\~{}mareg/kbmag/.

\bibitem{MR1476956}
D.~L. Johnson, A.~C. Kim, and H.~J. Song, \emph{The growth of the trefoil
  group}, Groups---{K}orea '94 ({P}usan), de Gruyter, Berlin, 1995,
  pp.~157--161. \MR{1476956 (98f:20007)}

\bibitem{MR2912844}
Yohei Komori and Yuriko Umemoto, \emph{On the growth of hyperbolic
  3-dimensional generalized simplex reflection groups}, Proc. Japan Acad. Ser.
  A Math. Sci. \textbf{88} (2012), no.~4, 62--65. \MR{2912844}

\bibitem{MR2746061}
Avinoam Mann, \emph{The growth of free products}, J. Algebra \textbf{326}
  (2011), 208--217. \MR{2746061 (2011m:20062)}

\bibitem{MR2894945}
\bysame, \emph{How groups grow}, London Mathematical Society Lecture Note
  Series, vol. 395, Cambridge University Press, Cambridge, 2012. \MR{2894945}

\bibitem{MR1285316}
Michael Shapiro, \emph{Growth of a {${\rm PSL}_2{\bf R}$} manifold group},
  Math. Nachr. \textbf{167} (1994), 279--312. \MR{1285316 (95f:57028)}

\end{thebibliography}

\def\cprime{$'$}
\providecommand{\bysame}{\leavevmode\hbox to3em{\hrulefill}\thinspace}
\providecommand{\MR}{\relax\ifhmode\unskip\space\fi MR }
\providecommand{\MRhref}[2]{%
  \href{http://www.ams.org/mathscinet-getitem?mr=#1}{#2}
}
\providecommand{\href}[2]{#2}

\end{document}